\newtheorem{theorem}{Theorem}[section]
\newtheorem{lemma}[theorem]{Lemma}
\newtheorem{proposition}[theorem]{Proposition}
\newtheorem{fact}[theorem]{Fact}
\newtheorem{remark}[theorem]{Remark}
\title{On the approximability of the burning number}
\author{Anders Martinsson\thanks{Department of Computer Science, ETH Z\"urich, Switzerland\newline anders.martinsson@inf.ethz.ch}}
\date{\today}
\begin{document}

\maketitle

\begin{abstract} The burning number of a graph $G$ is the smallest number $b$ such that the vertices of $G$ can be covered by balls of radii $0, 1, \dots, b-1$. As computing the burning number of a graph is known to be NP-hard, even on trees, it is natural to consider polynomial time approximation algorithms for the quantity. The best known approximation factor in the literature is $3$ for general graphs and $2$ for trees. In this note we give a $2/(1-e^{-2})+\varepsilon=2.313\dots$-approximation algorithm for the burning number of general graphs, and a PTAS for the burning number of trees and forests. Moreover, we show that computing a $(\frac53-\varepsilon)$-approximation of the burning number of a general graph $G$ is NP-hard.
\end{abstract}

\section{Introduction}

The burning number of a graph $G$, denoted $b(G)$ is the smallest number $b$ such that the vertices of $G$ can be covered by balls of radii $0, 1, \dots, b-1$. We can think of this as a process, or one-player game, of burning the graph $G$. Initially no vertices in $G$ are burning. At each time step $t=1, 2, \dots$, we get to pick a vertex $v_t$ and set it on fire. A vertex $v$ is burning at time $t$ if either $v=v_t$, or $v$ or one of its neighbors was burning at time $t-1$. Thus the burning number $b(G)$ denotes the earliest possible $t$ when all vertices of $G$ are on fire.

This quantity first appeared in print in a paper by Alon \cite{Alon92} who determined the burning number of the hypercube. It was later independently proposed by Bonato, Janssen and Roshanbin \cite{bonato15}. Since its reintroduction, the problem has gained quite some attention, focusing mainly on determining the burning number of particular classes of graphs, or the complexity of graph burning. In particular, Bonato et al \cite{bonato15} showed that $b(P_n)=\lceil \sqrt{n}\rceil$, and consequently conjectured that $b(G)\leq \lceil \sqrt{n}\rceil$ for all connected graphs on $n$ vertices. Following a sequence of papers proving $b(G)\leq (C+o(1))\sqrt{n}$ for successively smaller constants $C$ \cite{bonato15, burn1, burn2, pyrotechnics}, it was recently shown by Norin and Turcotte \cite{NT22} that $b(G)\leq (1+o(1))\sqrt{n}$, thus resolving the conjecture asymptotically. We refer to \cite{Bonatosurvey} for a recent survey of futher research on the topic.

In this note, we are interested in the computability of $b(G)$. It is known that computing the burning number of a graph is NP-complete. In fact, the problem is still hard if one restricts attention to very simple graph classes such as caterpillars of maximum degree $3$ \cite{Bessy17, Hiller21, Liu20, Gupta20+}.

As computing $b(G)$ exactly is hard, it is natural to ask for efficient approximation algorithms. As was observed by Bessy, Bonato, Janssen and Rautenbach  \cite{Bessy17}, a greedy assignment scheme gives a $3$-approximation of $b(G)$. We here give a slight variation of the algorithm.

\begin{algorithm}[H]\label{algo:1}
\SetAlgoLined
\KwData{$G=(V, E)$}
$r\leftarrow 0$\;
\While{$\exists$ an uncovered vertex $v$}{
    Place a ball with radius $r$ centered at $v$\;
    $r\leftarrow r+1$\;
}
\Return{r}\;
\caption{Greedy burning}
\end{algorithm}
The outputted value $r$ of the above algorithm is a $3$-approximation for $b(G)$: On the one hand, the algorithm clearly produces a feasible solution, so $b(G)\leq r$. On the other hand, as the centers of the $\lfloor r/3 \rfloor$ largest balls are at pairwise distance at least $2r/3$, no two of these vertices can be covered with a ball of radius $<r/3$, so $b(G)\geq r/3.$

For special cases of graphs, efficient algorithms with better approximation factors are known. In particular, Bonato and Lidbetter  \cite{BonatoLibetter19} proposed a $\frac32$-approximation algorithm when the graph is a path-forests (that is, a graph where each connected compontent is a path). Bonato and Kamali \cite{BonatoKamali19} gave $2$-approximation algorithm for the burning number of trees, and moreover gave an FPTAS for path-forests.

Given their results on path-forests, Bonato and Kamali posed the question of whether there is an FPTAS for the burning number of general graphs. This turns out to be false, assuming P$\neq$NP. Mondal, Rajasingh, Parthiban and Rajasingth \cite{Mondal22}, showed that that computing the burning number of a graph is APX-hard. In particular, there exists an $\varepsilon>0$ such that computing a $1+\varepsilon$-approximation of the burning number is NP-hard.

In this note, we aim to improve the best known approximation factors for the burning number. Our first result, improve the best known approximation factor of $b(G)$ that can be efficiently computed from $3$ to $2.313\dots$.

\begin{theorem}\label{thm:algo} There exists a polynomial time randomized algorithm that, given a graph $G$, outputs a number $r\geq b(G)$, such that $$r\leq \left(\frac{2}{1-e^{-2}}+e_{b(G)}\right) b(G)$$ with probability $1-O(1/poly(n))$ where $e_b$ denotes a function tending to $0$ as $b\rightarrow\infty$.
\end{theorem}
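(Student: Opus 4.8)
The plan is to improve on the greedy 3-approximation by being smarter about how the balls of the feasible solution are assigned, using the knowledge that an optimal solution with $b = b(G)$ balls exists. I would guess the value of $b(G)$ by exhaustive search (it lies in $\{1, \dots, n\}$), so fix a candidate $b$ with $b \ge b(G)$; the goal is then to produce, in randomized polynomial time, a feasible burning sequence of length at most roughly $\frac{2}{1-e^{-2}} b$. Since an optimal solution partitions (a subset of) $V$ into $b$ balls $B_0, \dots, B_{b-1}$ of radii $0, 1, \dots, b-1$, the natural idea is: use a fresh, larger budget of balls of radii $0, 1, \dots, R-1$ with $R \approx \frac{2}{1-e^{-2}} b$, and argue that some assignment of these new balls covers everything. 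The new balls of radius $\ge$ roughly $2b$ are each big enough to swallow an entire optimal ball $B_i$ (centering the new radius-$(2b)$ ball at the center of $B_i$ covers all of $B_i$ since $B_i$ has radius $< b$), so the real question is how to cover the centers of the optimal balls efficiently.

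The key combinatorial step, as I see it, is a \emph{randomized rounding / hitting} argument. Think of it this way: I want to pick a small set of "anchor" vertices such that every optimal center is within the appropriate radius of some anchor. Consider processing radii from large to small. A new ball of radius $r$ placed at vertex $v$ covers the optimal center $c_i$ whenever $d(v, c_i) \le r$; if I place the new ball at a \emph{random} already-burning vertex, or more precisely use the structure that the fire has already spread for many steps, then after $t$ steps of an arbitrary burning process the burning set has grown, and I can bound the probability that a given optimal center is not yet reached. The expression $1 - e^{-2}$ strongly suggests a Poisson-type calculation: if the new budget is a $\lambda$-factor larger than $b$, then a given optimal ball $B_i$ of radius $\rho_i$ gets "hit" by a random new center of radius $\ge \rho_i + \text{(slack)}$ with probability roughly $1 - e^{-\lambda'}$ for some $\lambda'$ proportional to the ratio of budgets, and choosing $\lambda = \frac{2}{1-e^{-2}}$ makes this probability exactly the threshold at which a union bound over the $\le b$ optimal centers (or a sequential/Lovász-Local-Lemma-style argument) closes. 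I would set up the random choice so that the number of new balls devoted to "catching" optimal centers is about $(\lambda - 2)b$ and the number devoted to "filling in" each caught ball is about $2b$, and verify $\lambda = 2 + (\lambda - 2)$ with the catching probability per ball being $1 - e^{-(\lambda-2)\cdot(\text{something})} \ge 1 - e^{-2}$, i.e. the factor $2$ in the exponent comes from the radius-$2b$ fill-in balls doing double duty.

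Concretely, the steps I would carry out are: (1) reduce to a fixed guess $b \ge b(G)$ and set $R = \lceil (\lambda + e_b) b\rceil$ with $\lambda = \frac{2}{1-e^{-2}}$; (2) run the greedy/arbitrary burning process for the first $\approx (\lambda - 2)b$ steps, choosing each new center uniformly at random among currently uncovered vertices (or among a carefully chosen candidate set), and show that with high probability every optimal center $c_i$ either is already burning or lies within distance $2b$ of some center chosen so far — this is where the $1 - e^{-2}$ survival probability and a Chernoff/union bound over the $\le b$ optimal centers enters; (3) for the remaining $\approx 2b$ steps, deterministically place the big balls of radii down to $\approx 2b$ to cover each optimal ball $B_i$ entirely via a center near $c_i$; (4) conclude that the total $R$ balls cover $V$, so the algorithm's output is at most $R \le (\lambda + e_b) b(G)$, and bound the failure probability by $O(1/\mathrm{poly}(n))$ by taking the guess loop and the union bounds into account, repeating $O(\log n)$ times to boost success probability.

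The main obstacle I anticipate is step (2): making the random burning process genuinely "spread" in a way that gives each optimal center an independent-ish $1 - e^{-2}$ chance of being hit. The subtlety is that the burning sets from different steps overlap and are correlated, and a naive union bound over the $b$ optimal centers needs the per-center failure probability to be $o(1/b)$, not just $< 1$ — so I expect the real argument amortizes over the full budget (e.g. the balls of radius between $0$ and $(\lambda-2)b$ collectively have total "volume" $\Theta((\lambda - 2)^2 b^2)$, comfortably exceeding the $b$ centers once one accounts for the geometry), or uses a more careful sequential argument where, conditioned on the first $j$ caught centers, the $(j{+}1)$-st is caught with good probability because the already-placed balls have enlarged the burning region. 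Getting the constants in the exponent to land exactly on $\frac{2}{1-e^{-2}}$ — rather than something slightly worse — is the delicate part, and I'd expect that to force the specific split "$2b$ for fill-in, $(\lambda-2)b$ for catching" and a matching calculation showing $e^{-(\lambda - 2)\cdot\frac{2}{\lambda - 2}\cdot\frac{1}{?}} $ collapses correctly; the $e_b \to 0$ error term absorbs all the rounding and the lower-order discrepancies between the continuous Poisson heuristic and the discrete process.
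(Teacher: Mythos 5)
Your step (2) is a genuine gap, and it is where your approach diverges fatally from a workable argument. Placing a ball at a \emph{uniformly random uncovered vertex} gives you no control whatsoever on its distance to a prescribed optimal center $c_i$: the uncovered set can be concentrated far from $c_i$, so there is no lower bound of the form $1-e^{-2}$ (or any constant) on the probability of ``catching'' $c_i$, and the union bound over the $\le b$ centers that you correctly identify as needing per-center failure probability $o(1/b)$ is not available. The constant $1-e^{-2}$ in the theorem does not arise from a Poisson hitting probability per optimal center at all.

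The paper's algorithm randomizes the \emph{radii}, not the centers: it greedily picks any uncovered vertex and assigns it a radius uniform on $[0,m]$. The analysis charges each placed ball to the optimal ball $B_i$ containing its center and uses the observation you almost had but misplaced: a ball centered at an \emph{arbitrary} vertex of $B_i$ (not at $c_i$) with radius $\ge 2i$ swallows all of $B_i$, which happens with probability $1-2i/m$ independently at each attempt. Hence the number of balls charged to $B_i$ is geometric with mean $1/(1-2i/m)$, and summing over $i$ gives at most $\frac{m}{2}\ln\frac{1}{1-2b/m}$ expected balls, which is $\le m$ precisely when $m\ge \frac{2}{1-e^{-2}}b$ --- that is where the constant comes from. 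One then applies Markov's inequality to get success probability $\ge 1/(m+1)$ per run, observes via a Chernoff bound that $m$ i.i.d.\ uniform radii on $[0,m]$ are dominated by $(0,1,\dots,m+O(\sqrt{m\log m})-1)$, and repeats $\Theta(m\log n)$ times; the guess loop over $m$ is harmless because the procedure only ever certifies correct upper bounds. Your two-phase ``catch then fill in'' split, and the attempt to make the geometry of fire spread deliver independent per-center hits, would need an entirely new (and I believe unavailable) concentration argument to be salvaged.
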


This will be shown in Section \ref{sec:algo}. For any $\varepsilon>0$, this computes a $\left(2/(1-e^{-2})+\varepsilon\right)$-approximation for $b(G)$, provided $b(G)$ is sufficiently large depending on $\varepsilon$. We remark that this can be extended to a polynomial time algorithm that computes a $\left(2/(1-e^{-2})+\varepsilon\right)$-approximation for $b(G)$ for all $G$ by using brute force search to handle the cases where $b(G)$ is small.

It is natural to ask for the best approximation factor $\alpha$ of the burning number that can be computed in polynomial time for a general graph. As a second result, we note that $\alpha\geq \frac53$.
\begin{proposition}\label{prop:inapprox} For any $\varepsilon>0$, it is NP-hard to approximate the burning number within a factor of $\frac53-\varepsilon$.
\end{proposition}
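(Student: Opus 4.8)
The plan is to reduce from a known NP-hard problem whose optimum has a gap of ratio $3/5$ in the resulting burning instances — the natural candidate is an exact-cover-type or domination-type problem where a ``yes'' instance forces $b(G)=3$ and a ``no'' instance forces $b(G)\geq 5$. Concretely, I would start from the NP-hardness of deciding whether a graph (or hypergraph) admits a \emph{distance-$1$ dominating set of size $k$} that is in fact a ``perfect'' cover, or more robustly from the hardness of \textsc{$3$-Dimensional Matching} / \textsc{Exact Cover by $3$-Sets}. Given such an instance with ground set of size $3k$ and a family of triples, build a graph $G$ in which choosing the three fire sources $v_1,v_2,v_3$ at times $1,2,3$ corresponds to selecting structures that must jointly cover everything within radii $2,1,0$. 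The gadget should be designed so that: if the combinatorial instance is a ``yes'' instance, three well-placed sources burn all of $G$ by time $3$, giving $b(G)=3$ (one checks $b(G)\neq 1,2$ by a diameter/size argument); and if it is a ``no'' instance, no placement of $v_1,\dots,v_4$ suffices, so $b(G)\geq 5$. Then a polynomial-time $(5/3-\varepsilon)$-approximation would distinguish $b(G)=3$ from $b(G)\geq 5$ and hence decide the original problem.

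The key steps, in order, are: (i) fix the source hard problem and its precise NP-hard promise version; (ii) describe the vertex set of $G$ — a central hub structure of bounded diameter, plus one ``pendant path'' of length $2$ attached to each element of the ground set, plus one vertex per triple connected so that burning that vertex at time $1$ covers exactly the three element-paths of its triple by time $3$; (iii) verify the ``yes'' direction: from an exact cover of size... adjust so that the three fires at times $1,2,3$ together reach every pendant leaf — typically one arranges that a single fire at distance $2$ must be spent on the triples, which constrains things, so one may instead need radii $b-1,\dots,0$ with $b=3$ handled by a more careful hub; (iv) verify the ``no'' direction: show that four balls of radii $3,2,1,0$ cannot cover $G$ unless a small exact cover exists, using a counting argument on the number of pendant leaves versus how many leaves a ball of each radius can reach; (v) conclude the gap and the inapproximability.

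The main obstacle I anticipate is engineering the gadget so that the gap is exactly $3$ versus $5$ rather than $3$ versus $4$: a ball of radius $3$ centered appropriately can reach quite far, so one must ensure that in a ``no'' instance \emph{even} the extra radius-$3$ ball at time $4$ cannot patch up the uncovered leaves, which likely forces the pendant paths to have length tuned to $3$ and the triple-vertices to sit at distance exactly $1$ from a hub while the leaves sit at distance $3$ from that hub. Balancing these distances so that (a) three balls suffice in the ``yes'' case, (b) four never suffice in the ``no'' case, and (c) the reduction stays polynomial and the small cases $b(G)\le 2$ are excluded by construction, is the delicate part. A secondary subtlety is ruling out ``cheating'' solutions that burn hub vertices instead of triple-vertices; this is handled by making the hub small (so burning inside it wastes a fire) and the number of element-paths large relative to the reach of any single ball.

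I would also remark afterward that, combined with Theorem~\ref{thm:algo}, this pins the optimal polynomial-time approximation factor $\alpha$ for the burning number of general graphs to the interval $[5/3,\,2/(1-e^{-2})]$, and note that closing this gap — in particular deciding whether $\alpha = 2$, matching the tree algorithm of Bonato and Kamali — remains open.
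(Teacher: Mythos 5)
There is a fatal flaw in the core plan of your reduction. You propose to create instances where a ``yes'' case gives $b(G)=3$ and a ``no'' case gives $b(G)\geq 5$. But for any fixed constant $b$, deciding whether $b(G)\leq b$ is solvable in polynomial time: one simply enumerates all $O(n^{b})$ choices of the $b$ ball centers and checks in polynomial time whether the balls of radii $0,1,\dots,b-1$ around them cover $V(G)$. In particular, distinguishing $b(G)\leq 3$ from $b(G)\geq 5$ is in P, so no such reduction can establish NP-hardness (unless P $=$ NP). No amount of gadget engineering — which you correctly identify as the delicate part but do not carry out — can rescue a reduction whose target gap sits at a constant value of the burning number. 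The gap must be between two quantities that both grow with the instance size.

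This is exactly what the paper's proof does differently: it reduces from the gap version of \textsc{Dominating Set} (via Raz--Safra, it is NP-hard to distinguish domination number $<\varepsilon d$ from $\geq 10 d$), and then \emph{scales} the graph by a parameter $d$: every edge of $G$ is subdivided into a path of length $2d$ and every vertex receives a pendant path of length $d$. A dominating set of size $<\varepsilon d$ yields $b(G')<(3+\varepsilon)d$ (ignite the dominating set in the first $\varepsilon d$ steps, then wait $2d+d$ steps for the fire to spread), while any burning of $G'$ in time $<5d$ yields a dominating set of size $<10d$ by projecting each ball center to at most two vertices of $V$. The ratio $5d/(3+\varepsilon)d\to 5/3$ gives the claimed inapproximability. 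Your instinct that the relevant ratio is $3$ versus $5$ is right, and your choice of a domination-type source problem is also on target, but the scaling by $d$ is the essential missing idea; without it the construction cannot work even in principle.
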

\begin{proof} We here make use of the fact that it is NP-hard to approximate the domination number of a graph $G=(V, E)$ within a factor of $O(\log n)$ \cite{RazSafra97}. In particular, given a graph $G$ and an integer $d$, it is NP-hard to distinguish between the cases where the domination number of $G$ is less than $\varepsilon d$ or at least $10d$. Without loss of generality we may assume that $1\leq d \leq |V|$.

We can distinguish these two cases as follows. Let $G$ and $d$ be given, and assume that $G$ either has domination number less than $\varepsilon d$ or at least $10d$. Construct the graph $G'$ as follows. Let $V'$ be a disjoint copy of the set $V$. Connect any two vertices $v, w\in V$ that are adjacent in $G$ by a path of length $2d$. Moreover connect any vertex $v\in V$ to its copy $v'\in V'$ by a path of length $d$. We claim that if $G$ has domination number less than $\varepsilon d$, then $b(G')< (3+\varepsilon)d$, and if $G$ has domination number at least $10d$, then $b(G')\geq 5d$. As any $(\frac53-\varepsilon)$-approximation algorithm for $b(G)$ can distinguish these two cases for $b(G')$, it follows that approximating the burning number of a graph up to this factor is NP-hard, as desired.

It remains to show this relation between the domination number of $G$ and the burning number of $G'$. First suppose that $G$ has a dominating set $D$ of size $<\varepsilon d$. We can burn $G'$ by first setting fire to all vertices in $D$ and then waiting for $3d$ time steps: As $D$ is a dominating set in $G$, it follows that the fire reaches all vertices in $V$ after at most $2d$ time steps. By construction of $G'$, all vertices are at distance at most $d$ from a vertex in $V$, hence waiting $d$ additional steps ensures that all vertices are on fire. Hence $b(G')<(3+\varepsilon)d$.

Conversely, suppose that $b(G') < 5d$, and consider a corresponding covering of $G'$ with $b(G')$ balls. Let $D\subseteq V$ be the set of vertices constructed as follows. For each ball whose center is on the path strictly between two vertices $v, w\in V$, we add both $v$ and $w$ to $D$. For any ball whose center is either a vertex $v\in V$, its copy $v'\in V'$, or a vertex on the path between these vertices, we add $v$ to $D$. Then $|D|\leq 2b(G')<10d$. To see that $D$ is a dominating set in $G$, note that any vertex $v'\in V'$ is at distance strictly less than $5d$ from a vertex in $D$ in $G'$, hence any vertex $v\in V$ is at distance strictly less than $4d$ from a vertex in $D$, meaning it is at distance strictly less than $2$ from a vertex in $D$ in $G$. Hence, $D$ is a dominating set of size less than $10d$. By contraposition, we get that if $G$ has domination number at least $10d$, then $b(G')\geq 5d$, proving the second direction of the desired relation.
\end{proof}

Finally, we give a PTAS for the burning number of trees and forests.

\begin{theorem} For any $\varepsilon>0$, there exists an algorithm that computes the burning number of a forest on $n$ vertices within a factor of $1+\varepsilon$ in $n^{O(1/\varepsilon)}$ time.
\end{theorem}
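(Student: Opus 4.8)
The plan is to binary search on the target value: for each candidate $b\in\{1,\dots,n\}$ we will design a procedure that either produces a burning of the input forest $F$ in at most $\lceil(1+O(\varepsilon))b\rceil$ steps or correctly reports that $b(F)>b$; running this for all $b$ and returning the bound obtained from the smallest $b$ that succeeds then gives a $(1+O(\varepsilon))$-approximation, and rescaling $\varepsilon$ (handling the finitely many cases where $b$ is below a constant depending on $\varepsilon$ by brute force) yields the theorem. So fix $b$, assume it is large compared to $1/\varepsilon$, and begin with a \emph{small-ball reduction}: we claim $b(F)\le b$ implies $F$ can be covered by balls of radii $t,t+1,\dots,b'-1$ (one of each), where $t:=\lceil\varepsilon b\rceil$ and $b':=b+t\le(1+\varepsilon)b+1$. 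Indeed, given an optimal burning with sources $v_1,\dots,v_b$ fired at times $1,\dots,b$, so that $v_i$ has effective radius $b-i$, reschedule within the longer horizon $\{1,\dots,b'\}$: keep $v_1,\dots,v_{b-t}$ at their old times (their effective radii only grow) and move the $t$ sources of effective radius $<t$ to the times $b-t+1,\dots,b$, where they acquire effective radii $t,\dots,2t-1$; fill the remaining $t$ time slots with redundant sources. No source's effective radius decreases, so this is still a burning, and the effective radii actually used are exactly $\{t,\dots,b'-1\}$; conversely a covering by balls of these radii certifies $b(F)\le b'$.

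Next, \emph{rounding}. The radii $\{t,\dots,b'-1\}$ lie in a range of ratio $O(1/\varepsilon)$. Round each of them \emph{up} to the next point of a geometric grid of common ratio $1+\varepsilon$: each radius grows by a factor at most $1+\varepsilon$, so a covering by the rounded radii still fits into the slots $\{0,\dots,b''-1\}$ for some $b''\le(1+\varepsilon)b'$, while only $K=O(\tfrac1\varepsilon\log\tfrac1\varepsilon)=O_\varepsilon(1)$ distinct radius values remain. Since rounding up can only help coverage, $b(F)\le b$ still implies $F$ is coverable by the rounded multiset $M$ of balls, and any covering by $M$ certifies $b(F)\le b''\le(1+O(\varepsilon))b$. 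We have thus reduced, at the cost of a single $1+O(\varepsilon)$ factor, to deciding whether $F$ can be covered by a prescribed multiset $M$ of balls whose radii take only $K=O_\varepsilon(1)$ distinct values.

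For a single tree $T$ and a multiset of balls with $O_\varepsilon(1)$ distinct radii, coverability is a polynomial-time bottom-up \emph{dynamic program}, in the spirit of the classical tree $k$-center DP but generalized to constantly many radii. Root $T$; the table at a vertex $u$ is indexed by (i) the \emph{histogram} recording how many of each of the $K$ radius classes have been placed inside the subtree $T_u$, and (ii) two $O(n)$-bounded interface numbers — the largest leftover radius at $u$ of a ball centered in $T_u$ (the ball that may be ``exported'' upward through $u$) and the distance from $u$ to the deepest vertex of $T_u$ not yet covered from within $T_u$ (the reach still required from above). One merges the children of $u$ one at a time, then optionally places one ball centered at $u$, checking consistency — in particular, after all children are merged, the ball exported from one branch may be exactly the ball that covers the deepest uncovered vertex in a sibling branch, at $u$, or above. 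Since the histogram has $K=O_\varepsilon(1)$ coordinates each bounded by $n$, the table has $n^{O_\varepsilon(1)}$ entries and the DP runs in $n^{O_\varepsilon(1)}$ time. I expect \emph{this} to be the main obstacle: designing the interface so that a single deep ball which simultaneously covers vertices within its branch, in sibling branches, at $u$, and above $u$ is accounted correctly and without double counting, while keeping the state space polynomial.

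Finally, forests and wrap-up. The trees of $F$ draw balls from the one shared multiset $M$, so we wrap the per-tree tables in a knapsack-style DP over the trees whose state is the total histogram of radius classes consumed so far; after processing all trees we scan the reachable total histograms and, for each, compute the smallest horizon into whose slots $\{0,\dots,b''-1\}$ the corresponding multiset of radii fits — namely $b''=\max\{\,N,\ \max_i(\rho_i+i)\,\}$, where $\rho_1\ge\rho_2\ge\cdots$ are the sorted radii and $N$ their number — and declare success iff some reachable total histogram both covers every tree and has $b''\le\lceil(1+O(\varepsilon))b\rceil$. Correctness then follows from the first two steps, and combining with the outer binary search gives the PTAS. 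The running time is $n^{O_\varepsilon(1)}$; checking that the number of rounding classes and histogram sizes bring the exponent down to $O(1/\varepsilon)$ as stated, and that the two successive $1+\varepsilon$ losses compose into a single $1+O(\varepsilon)$, is the routine bookkeeping.
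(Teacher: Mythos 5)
Your overall strategy coincides with the paper's: reduce to coverings using only $O_\varepsilon(1)$ distinct radius values, run a tree DP whose state contains the histogram of how many balls of each class are used in the current subtree, and combine components of the forest by a sumset/knapsack step. Two points differ, one cosmetic and one substantive. First, your two-stage rounding (shift all radii above $\varepsilon b$, then round onto a geometric grid) is more complicated than necessary and gives $K=O(\frac1\varepsilon\log\frac1\varepsilon)$ classes, hence running time $n^{O(\frac1\varepsilon\log\frac1\varepsilon)}$ rather than the stated $n^{O(1/\varepsilon)}$; the paper simply rounds each radius $i$ up to the next multiple of $a=\lceil\varepsilon b\rceil$ (Lemma \ref{lem:dp}), which costs only $a-1$ extra time steps overall and yields exactly $O(1/\varepsilon)$ classes. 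Still a PTAS, so this is harmless.

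The substantive issue is the one you flag yourself: the interface of the tree DP. As written, the transition that lets a single ball centered deep in one branch simultaneously satisfy demands in sibling branches, at $u$, and above $u$ is not specified, and this is exactly the crux of the argument, so the proposal has a genuine gap there. It is a fillable gap: the classical resolution is that if the best ball exportable from $T_u$ has leftover radius $\rho$ at $u$ while some vertex of $T_u$ at depth $d>\rho$ remains uncovered, then the outside ball that must cover that vertex passes through $u$ with leftover $\ge d>\rho$ and hence dominates the exported ball everywhere outside $T_u$; so the supply/demand pair collapses to a single signed number and your table stays consistent. The paper sidesteps this bookkeeping entirely with a different DP formulation: its state $\mathcal{C}_k(v)$ records only the ``demand'' side (covers of $D(v)$ minus a connected piece around $v$ of depth $<k$), and the ``supply'' side is handled by explicitly enumerating, when a component is closed off, its center $v_0$ and radius $r$ and recursing on the subtrees hanging off the $v_0$--$v$ path via $\mathcal{C}_{r-d(v_0,v_i)-1}(v_i)$. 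You should either adopt that formulation or write out the domination argument above; without one of the two, the central step of your proof is asserted rather than proved.
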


In fact, the solution is quite simple. Given a rough estimate $r$ for the size of $b(T)$ and a small constant $\varepsilon>0$, we consider the problem of covering $T$ with balls of radii multiple of $\varepsilon r$. On the one hand, this only shifts the optimal value by an additive error of $\varepsilon r$. On the other hand, the rounding reduces the search space enough that the problem can be solved in polynomial time using dynamic programming. This will be shown in Section \ref{sec:dp}.

\begin{remark} Shortly after this note was uploaded to arxiv, a paper appeared in the proceedings of APPROX 2023 by Lieskovsk\'y, Sgall and Feldmann \cite{LSF23} which independently prove Theorem \ref{thm:algo} as well as proving that approximating $b(G)$ within a factor of $\frac43$ is NP-hard.
\end{remark}

\section{Greedy random burning}\label{sec:algo}

The core of the proof of Theorem \ref{thm:algo} is the following modification of Algorithm \ref{algo:1}. Given a graph $G$ and a number $m$, we consider the following algorithm.

\begin{algorithm}[H]\label{algo:2}
\SetAlgoLined
\KwData{$G=(V, E)$, parameter $m$}
$t\leftarrow 0$\;
\While{$\exists$ an uncovered vertex $v$}{
    Place a ball with radius $r_t\sim U[0, m]$ centered at $v$\;
    $t\leftarrow t+1$
}
\caption{Greedy random burning}
\end{algorithm}

\begin{lemma}\label{lem:linexp} If $m\geq (2/(1-e^{-2})) b(G)$, then Algorithm \ref{algo:2} places, in expectation, at most $m$ balls before terminating. 
\end{lemma}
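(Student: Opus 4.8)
The plan is to fix an optimal burning scheme for $G$ and use it to bound the expected number of balls placed by Algorithm~\ref{algo:2}, one optimal ball at a time. Write $b=b(G)$ and fix vertices $u_0,u_1,\dots,u_{b-1}$ with $V=\bigcup_{i=0}^{b-1}B(u_i,i)$, where $B(u,r)$ denotes the ball of radius $r$ around $u$. The key geometric observation is that if a vertex $v$ lies in $B(u_i,i)$, then $B(v,2i)\supseteq B(u_i,i)$, since every vertex of $B(u_i,i)$ is within distance $i$ of $u_i$, which is within distance $i$ of $v$. Call the optimal ball $B(u_i,i)$ \emph{dead} after step $t$ if it is contained in the union of the balls placed by the algorithm in steps $1,\dots,t$; a dead ball stays dead, and since the $B(u_i,i)$ cover $V$, the algorithm terminates precisely when all $b$ of them are dead.

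Next I would set up a charging scheme. Each step $t$ picks an uncovered vertex $v_t$, which lies in at least one optimal ball; moreover every optimal ball containing $v_t$ is non-dead at the start of step $t$, for otherwise $v_t$ would already be covered. Charge step $t$ to one such ball, say the one of smallest radius, and let $S_i$ be the set of steps charged to ball $i$. Since every step is charged to exactly one ball, the total number $N$ of balls placed satisfies $N=\sum_{i=0}^{b-1}|S_i|$, so it suffices to show $\mathbb{E}|S_i|\le m/(m-2i)$ for each $i$.

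To prove this, note that every step $t\in S_i$ occurs while ball $i$ is alive and has $v_t\in B(u_i,i)$, so by the geometric observation such a step makes ball $i$ dead as soon as its radius $r_t$ is at least $2i$. The choice of $v_t$ — and hence whether step $t$ is charged to ball $i$ and whether ball $i$ is still alive at step $t$ — depends only on $G$ and the history $v_1,r_1,\dots,v_{t-1},r_{t-1},v_t$, in particular not on $r_t$, which is an independent $U[0,m]$ variable. Since $m\ge \frac{2}{1-e^{-2}}b>2b>2i$, this shows that, conditioned on everything up to just before $r_t$ is drawn, each step of $S_i$ kills ball $i$ with probability at least $p_i:=\mathbb{P}(U[0,m]\ge 2i)=(m-2i)/m>0$. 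Peeling off one step at a time then gives $\mathbb{P}(|S_i|\ge k)\le(1-p_i)^{k-1}$ — because if $S_i$ has a $k$-th element then ball $i$ survived its $(k-1)$-st element — so $\mathbb{E}|S_i|=\sum_{k\ge1}\mathbb{P}(|S_i|\ge k)\le 1/p_i=m/(m-2i)$. I expect this to be the main obstacle: making rigorous that the steps of $S_i$ behave like independent Bernoulli trials, even though \emph{which} steps land in $S_i$ is itself random and history-dependent; the conditioning argument above is the clean way to handle it, and one should also note $N\le|V|$ so there are no integrability issues.

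Summing over $i$, $\mathbb{E}N\le\sum_{i=0}^{b-1}\frac{m}{m-2i}=m\sum_{i=0}^{b-1}\frac{1}{m-2i}$. Finally I would bound the sum by an integral: since $x\mapsto 1/(m-2x)$ is increasing on $[0,b]$ (using $m>2b$), $\sum_{i=0}^{b-1}\frac{1}{m-2i}\le\int_0^b\frac{dx}{m-2x}=\frac12\ln\frac{m}{m-2b}$. The hypothesis $m\ge\frac{2}{1-e^{-2}}b$ is exactly the statement that $\frac{m}{m-2b}\le e^2$, so this is at most $1$ and hence $\mathbb{E}N\le m$, as claimed. (A quick check: at $m=\frac{2}{1-e^{-2}}b$ one has $m-2b=\frac{2e^{-2}}{1-e^{-2}}b$, so $\frac{m}{m-2b}=e^2$ with equality, which also explains why the constant $\frac{2}{1-e^{-2}}$ is the right one.)
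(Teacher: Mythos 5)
Your proof is correct and follows essentially the same approach as the paper: fix an optimal cover $B_0,\dots,B_{b-1}$, observe that a ball of radius $\ge 2i$ centered inside $B_i$ covers all of $B_i$, bound the expected number of steps attributable to $B_i$ by $1/(1-2i/m)$, and compare the sum to $\frac{m}{2}\ln\frac{m}{m-2b}$. Your write-up is in fact more careful than the paper's (explicit charging to a single ball and the conditioning argument justifying the geometric tail bound), but the underlying argument is identical.
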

\begin{proof} Let $B_0, B_1, \dots, B_{b(G)-1}$ be any covering of $G$ with balls of radii $0, 1, \dots, b(G)-1$. Consider the number of times the algorithm tries to place a ball centered in $B_i$ for some fixed $i$. If at any such time, the algorithm chooses a radius $r_t\geq 2\cdot i$, then the entire set $B_i$ is covered and will not be chosen again. The probability that this happens is $1-2i/m$. Thus, in expectation, the algorithm places at most $1/(1-2i/m)$ balls centered at a vertex in $B_i$. By linearity of expectation, the algorithm places at most
$$\sum_{i=0}^{b(G)-1} \frac1{1-2i/m} \leq \int_0^{b(G)} \frac{1}{1-2x/m}\,dx = \frac{m}{2} \ln\left(\frac{1}{1-2b(G)/m} \right) $$
balls in total. It is readily seen that this $\leq m$ when $m\geq (2/(1-e^{-2})) b(G)$.
\end{proof}

We say that a sequence $(a_1, \dots, a_k)$ is dominated by a sequence $(b_1, \dots, b_\ell)$ if $k\leq \ell$ and, for each element $a_i$, we can uniquely assign an element $b_{j_i}$ such that $a_i \leq b_{j_i}$. The following is a standard consequence of Chernoff bounds.

\begin{fact}\label{fact:chernoff} Consider a sequence $(r_1, \dots, r_{m})$ of $m$ independent numbers distributed uniformly between $0$ and $m$. With probability at least $1-1/2(m+1)$, this is dominated by the sequence $(0, 1, 2, \dots, m+O(\sqrt{m\log m})-1)).$
\end{fact}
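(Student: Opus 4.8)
The plan is to translate the combinatorial domination condition into a concentration statement about the order statistics of $r_1, \dots, r_m$, and then invoke a standard Chernoff lower-tail bound followed by a union bound. Write $s$ for the additive slack to be chosen, so that the target sequence is $(b_1, \dots, b_{m+s})$ with $b_j = j-1$, and let $r_{(1)} \le \dots \le r_{(m)}$ denote the sorted sample.

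The first step is a purely combinatorial observation: $(r_1,\dots,r_m)$ is dominated by $(b_1,\dots,b_{m+s})$ if and only if $r_{(j)} \le b_{s+j} = s+j-1$ for every $j \in \{1,\dots,m\}$. This is a standard greedy-matching fact: if a valid assignment exists, then the $m-j+1$ largest samples are mapped to $m-j+1$ distinct indices whose $b$-values are all at least $r_{(j)}$, and since $b_1 \le \dots \le b_{m+s}$ this forces $b_{s+j} \ge r_{(j)}$; conversely, once all these inequalities hold, $r_{(j)} \mapsto b_{s+j}$ is itself a valid assignment.

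The second step rephrases this through counts. For an integer $u$ let $N(u) = \#\{i : r_i \le u\}$; then $r_{(j)} \le s+j-1$ is equivalent to $N(s+j-1) \ge j$, that is, writing $u = s+j-1$, to $N(u) \ge u - s + 1$ for every $u \in \{s, s+1, \dots, s+m-1\}$. When $u \ge m$ this holds with certainty, since every $r_i$ lies in $[0,m]$ so $N(u) = m$, while $u - s + 1 \le m$. When $u < m$, the variable $N(u)$ has the binomial distribution $\mathrm{Bin}(m, u/m)$ with mean $u \le m$, and the required inequality is exactly $N(u) \ge \mathbb{E}\, N(u) - (s-1)$; the Chernoff lower-tail bound then gives $\Pr\big[N(u) < \mathbb{E}\, N(u) - (s-1)\big] \le \exp\big({-}(s-1)^2/(2m)\big)$.

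Finally, one fixes the constant hidden in $s = O(\sqrt{m\log m})$. Taking $s = \lceil C\sqrt{m\log m}\, \rceil$ for a large enough absolute constant $C$ makes the bound above smaller than $1/(2m(m+1))$, so that a union bound over the at most $m$ relevant thresholds $u$ keeps the total failure probability below $1/(2(m+1))$; the finitely many small values of $m$ with $s \ge m$ need no argument, since then every condition is deterministic. I do not anticipate a genuine obstacle: the points that need care are getting the order-statistics characterization of domination right, in particular the index shift by $s$; noting that it suffices to check the condition at the integer thresholds $u$ because the $b_j$ are integers; and bookkeeping the constants so that the union bound lands below $1/(2(m+1))$.
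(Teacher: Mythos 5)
The paper offers no proof of this fact, merely calling it ``a standard consequence of Chernoff bounds,'' and your argument is exactly the standard argument being alluded to: reduce domination to the order-statistic inequalities $r_{(j)}\le s+j-1$ via greedy matching, rewrite these as lower-tail events for the binomial counts $N(u)\sim\mathrm{Bin}(m,u/m)$, and finish with a Chernoff bound and a union bound over the at most $m$ integer thresholds. Your write-up is correct, including the index bookkeeping and the handling of thresholds $u\ge m$ and of small $m$.
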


\begin{proof}[Proof of Theorem \ref{thm:algo}.] Let $G$ be given, and suppose we can guess a value $m$ which is between $(2/(1-e^{-2}))b(G)$ and $(2/(1-e^{-2}))b(G)+O(\sqrt{m \log m})$.

Run Algorithm \ref{algo:2} on $G$ with parameter $m$. Compute the smallest $r$ such that the sequence of radii is dominated by $(0, 1, 2, \dots, r-1)$. This always gives us an upper bound for $b(G)$. Moreover, it follows, by Lemma \ref{lem:linexp} and Markov's inequality, that Algorithm \ref{algo:2} uses at most $m$ balls with probability at most $1-m/(m+1)=1/(m+1)$. By Fact \ref{fact:chernoff}, the corresponding sequence is dominated by $(0, 1, \dots m+O(\sqrt{m\log m})-1)$ hence proving $b(G)\leq m+O(\sqrt{m\log m})$ with probability at least $1/(m+1)-1/2(m+1)\geq 1/2(m+1).$ Hence repeating this procedure $\Theta(m \log n)$ times, the algorithm will successfully prove that $b(G)\leq (1+\varepsilon)m$ at least once with probability $1-O(1/poly(n)).$

To conclude the proof, note that the procedure above never produces an incorrect upper bound on $b(G)$. Hence, we can simply try all values of possible values of $m$ one at a time.
\end{proof}

\begin{remark} We end this section with two short comments. First, it is possible to obtain the same approximation factor using a deterministic algorithm. Second, we believe that it should be possible to improve the approximation factor of the algorithm somewhat by biasing the probability distribution in Algorithm \ref{algo:2} towards picking small radii slightly more often, and compensating for this by removing any ball placed by the algorithm which is completely contained in a larger ball placed after it. For the sake of brevity, we will not attempt either of these modifications here.
\end{remark}

\section{Burning forests with dynamic programming}\label{sec:dp}

In this section, we will show how to use dynamic programming to approximate the burning number of forests within a factor of $1+O(\varepsilon)$ in polynomial time. 

We call a multiset of non-negative integers $\{r_1, \dots, r_k\}$ a cover of $G$ if it is possible to partition the vertices of $G$ into connected components $A_1, \dots, A_k$ such that the radius of each component $C_i$ is at most $r_i$.

\begin{lemma}\label{lem:dp} Let $a$ be a positive integer.\begin{enumerate}
    \item If $b(G)\leq b$, then there exists a subset of the multiset $\{ a\lceil i/a\rceil :0 \leq i < b \}$ that is a cover of $G$.
    \item If a subset of the multiset $\{ a\lceil i/a\rceil :0 \leq i < b \}$ is a cover of $G$, then $b(G)\leq b+a-1$.
\end{enumerate}
\end{lemma}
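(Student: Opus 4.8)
The plan is to prove the second (easier) part of the statement directly and to reduce the first part to the case $a=1$, which is where the real content lies. For the second part, suppose $\{r_1,\dots,r_k\}\subseteq\{a\lceil i/a\rceil: 0\le i<b\}$ is a cover, realised by a partition of $V(G)$ into connected sets $A_1,\dots,A_k$ with $\mathrm{rad}(A_j)\le r_j$. I would write $r_j=a\lceil i_j/a\rceil$ for distinct $i_j\in\{0,\dots,b-1\}$ and use $a\lceil i_j/a\rceil\le i_j+a-1$: choosing a center $c_j$ of $A_j$ gives $A_j\subseteq B(c_j,i_j+a-1)$, so the balls $B(c_j,i_j+a-1)$ cover $G$, their radii being distinct members of $\{a-1,\dots,b+a-2\}$. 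Padding with dummy balls to realise every radius in $\{0,1,\dots,b+a-2\}$ then yields $b(G)\le b+a-1$. No structural hypothesis on $G$ is used here.

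For the first part, I would first observe that it suffices to treat $a=1$: if $\{i_1,\dots,i_m\}\subseteq\{0,\dots,b-1\}$ is a cover witnessed by connected sets $A_1,\dots,A_m$ with $\mathrm{rad}(A_j)\le i_j$, then (since the $i_j$ are distinct) $\{a\lceil i_j/a\rceil\}$ is a subset of $\{a\lceil i/a\rceil:0\le i<b\}$ and $\mathrm{rad}(A_j)\le i_j\le a\lceil i_j/a\rceil$, so this rounded multiset is again a cover. The goal thus becomes: $b(G)\le b$ implies some subset of $\{0,1,\dots,b-1\}$ is a cover of $G$. I would fix a covering of $G$ by balls $B(x_1,b-1),\dots,B(x_b,0)$ of radii $b-1,\dots,0$, introduce the ``fractional burning time'' $f(w)=\min_{1\le t\le b}\bigl(t+d(x_t,w)\bigr)$ — which satisfies $f(w)\le b$ for all $w$ since the balls cover $G$, and is $1$-Lipschitz along edges — and assign each vertex $w$ to the \emph{largest} index $t^\ast(w)$ attaining the minimum in $f(w)$, setting $A_t=\{w: t^\ast(w)=t\}$. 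These sets partition $V(G)$, and $A_t\subseteq B(x_t,b-t)$ because $f(w)=t+d(x_t,w)\le b$ for $w\in A_t$.

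The crux is to show that each $A_t$ is connected, with $\mathrm{rad}(A_t)\le b-t$; for this it is enough that whenever $w\in A_t$ and $d(x_t,w)\ge1$, a neighbour $w'$ of $w$ one step closer to $x_t$ also lies in $A_t$, since iterating then places a whole shortest $x_t$--$w$ path inside $A_t$, witnessing both connectivity and (with centre $x_t$) the radius bound. Here the key computation is that $f(w')=f(w)-1=t+d(x_t,w')$ (the upper bound is immediate, the lower bound is $1$-Lipschitzness), so $t$ attains the minimum for $w'$; and were some $t''>t$ to do so as well, then
\[
f(w)\ \le\ t''+d(x_{t''},w)\ \le\ t''+d(x_{t''},w')+1\ =\ f(w')+1\ =\ f(w),
\]
forcing all inequalities to be equalities, so $t''$ attains the minimum for $w$, contradicting the maximality of $t^\ast(w)=t$. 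Hence $t^\ast(w')=t$. The nonempty $A_t$ then form a connected partition of $G$ with radii bounded by the distinct values $b-t\in\{0,\dots,b-1\}$, so a subset of $\{0,1,\dots,b-1\}$ is a cover, completing the reduction.

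I expect the connectivity claim for $A_t$ to be the only real obstacle; the rest is bookkeeping. One point worth flagging is that this argument never uses that $G$ is a forest — the explicit shortest path produced inside $A_t$ bounds $\mathrm{rad}(A_t)$ on its own — so the forest hypothesis is relevant only to the later dynamic program that decides whether a given multiset has a sub-multiset which is a cover.
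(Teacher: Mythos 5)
Your proof is correct and follows the same route as the paper's one-line argument (the rounding inequality $i\le a\lceil i/a\rceil\le i+a-1$ handles the passage between $a=1$ and general $a$ in both directions), but it additionally supplies the step the paper treats as immediate: that a covering by balls of radii $0,\dots,b-1$ can be converted into a partition into connected sets satisfying the same radius bounds. Your tie-broken ``first fire to arrive'' decomposition via $f(w)=\min_t(t+d(x_t,w))$, together with the observation that the constructed shortest path to $x_t$ stays inside $A_t$ (so the radius bound holds even measured in the induced subgraph, and for arbitrary $G$, not just forests), is exactly the right way to make this rigorous.
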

\begin{proof}
    This follows directly from noting that $i \leq a\lceil i/a \rceil \leq i+a-1$.
\end{proof}

It follows from this that, in order to determine $b(G)$ up to an additive error of $a$, it suffices to determine the set of covers of $G$ consisting of multiples of $a$. Moreover, if we know an upper bound on $b(G)\leq m$, e.g. by using Algorithm \ref{algo:1}, it suffices to consider multiples of $a$ up to most $m$.

Crucially this means that if we wish to determine $b(G)$ up to a multiplicative error of $1+\varepsilon$, then we only need to consider covers consisting of $O(1/\varepsilon)$ different radii. So any cover can be represented as a $O(1/\varepsilon)$-dimensional vector of non-negative integers whose sum is, say, at most $|V(G)|$. In particular, this has polynomial size in $|V(G)|$, which is small enough that we can use dynamic programming in polynomial time.

Let us now describe how to use dynamic programming to determine the set of covers of a tree $G$. In the case where $G$ is a forest, we can simply apply this to each component of $G$ and combine the result.

Pick a vertex $r$ which we call the root of $G$. Any vertex of degree $1$ which is not $r$ is called a leaf. For any vertex $v$ we call its descendants, $D(v)$ the subgraph of $G$ formed by the set of all vertices whose path to $r$ goes through $v$, including $v$ itself.

For each vertex $v$ we define $\mathcal{C}_0(v)$ to be the set of covers of $D(v)$. For any $k\geq 1$, we similarly put $\mathcal{C}_k(v)$ to be the set of covers of any subgraph of the form $D(v)\setminus A$ where $A$ is either empty, or $A$ is a connected subgraph $v\in A\subseteq D(v)$ containing vertices at distance $<k$ from $v$. In other words, $\mathcal{C}_k(v)$ contains all the ways to cover all vertices of $D(v)$, except possibly for a not too large connected vertex set subset around $v$.

To initialize the recursion, we have for any leaf $v$ of $G$ that $\mathcal{C}_0(v)=\{(1, 0, 0, \dots), (0, 1, 0, \dots) \dots \}$, and $\mathcal{C}_k(v)=\{0, (1, 0, 0, \dots), (0, 1, 0, \dots) \dots \}$ for any $k\geq 1$.

Assuming the function $\mathcal{C}_k(w)$ has been determined for all children $w$ of a vertex $v$, we can compute $\mathcal{C}_0(v)$ as follows. First, in any cover of the descendants of $v$, $v$ itself must be contained in some connected component $A$ of radius at most $r$ for some multiple $r$ of $a$. This component must be centered at some vertex $v_0\in D(v)$, and have a radius $r$, which is a multiple of $a$. Let $v_1, \dots, v_\ell$ denote the vertices in $D(v)$ adjacent to the path from $v_0$ to $v$. For each of these, the intersection of $A\cap D(v_i)$ is either empty, or is a connected component containing vertices in $D(v_i)$ at distance at most $r-d(v_0, v_i)$ from $v_i$, including $v_i$ itself. Hence, we can write the corresponding set of covers associated with the choice of $v_0$ and $r$ as
$$\mathcal{C}_0(v, v_0, r) := e_r + \mathcal{C}_{r-d(v_0, v_1)-1}(v_1) + \dots + \mathcal{C}_{r-d(v_0, v_\ell)-1}(v_\ell),$$
where $e_r$ denotes the unit vector representing one element of size $r$, and where we interpret the sum of sets as the set of all sums that can be formed from elements in the corresponding sets. Finally this gives us $$\mathcal{C}_0(v) = \bigcup_{v_0\in D(v)} \bigcup_{r\geq d(v, v_0)} \mathcal{C}_0(v, v_0, r).$$

Given this, in order to compute $\mathcal{C}_k(v)$ for $k\geq 1$, let $w_1, \dots w_\ell$ denote the children of $v$. The set of covers where $v$ is not in a separate component is given by $\mathcal{C}_0(v),$ which we have already determined. Moreover, the set of covers where a connected component around $v$ is excluded we can express simply as
$$ \mathcal{C}_{k-1}(w_1) + \mathcal{C}_{k-1}(w_2) + \dots + \mathcal{C}_{k-1}(w_\ell).  $$

Having computed these sets for all vertices $v\in G$ and all $0\leq k \leq m$, we can simply read off the set of covers of $G$ from the set $\mathcal{C}_0(r)$. (If $G$ is a forest, similarly, the set of covers is the sumset of $\mathcal{C}_0(r_i)$ for the roots $r_1, r_2, \dots$ of each connected component.) We simply compare this to Lemma \ref{lem:dp} to find the smallest value $b$ for which both conclusions hold. Then we know $b \leq b(G) \leq b+a-1$, which gives us the desired approximation of $b(G)$.

In order to run this in polynomial time, we need efficient implementations of unions $\mathcal{A}\cup \mathcal{B}$ and sumsets $\mathcal{A}+\mathcal{B}$ of sets of covers of subgraphs of $G$. (Computing $\ell$-fold sums can be performed by calling the sumset function $\ell$ times.) As the set of covers we consider are represented as $O(1/\varepsilon)$-dimensional vectors of non-negative integers whose sum never exceeds $|V(G)|$, the sizes of the sets never exceed $|V(G)|^{O(1/\varepsilon)}$. Hence, any naïve implementation of these operations run in polynomial time.

\bibliographystyle{abbrv}
\bibliography{references}

\end{document}